\documentclass[final,5p,times,twocolumn]{elsarticle}

\usepackage{multicol}
\usepackage{color}
\usepackage{xspace}

\usepackage{amssymb}
\usepackage{amsthm}
\usepackage{latexsym}
\usepackage{amsfonts}
\usepackage{amsmath}
\usepackage{mathtools}
\usepackage{caption}
\usepackage{algpseudocode}
\usepackage[ruled]{algorithm}
\usepackage{url}
\usepackage[mathscr]{euscript}

\newtheorem{proposition}{Proposition}

\newtheorem{theorem}{Theorem}

\newtheorem{remark}{Remark}

\renewcommand{\theta}{\vartheta}

\newcommand{\Sy}{{\mathscr S}_n}
\newcommand{\sdp}{{\mathscr S}_n^{+}}
\newcommand{\sdn}{{\mathscr S}_n^{-}}

\newcommand{\Aat}{{\mathscr A}^{\top}}

\newcommand{\Aa}{{\mathscr A}}

\newcommand{\RR}{\mathbb{R}}


\newcommand{\Diag}{{\rm Diag}}
\newcommand{\diag}{{\rm diag}}

\newcommand{\ignore}[1]{}
\newcommand\R{{\mathbb R}}

\def\ignore#1{}

\journal{OR letters}

\begin{document}

\begin{frontmatter}

\title{Using a Factored Dual in Augmented Lagrangian Methods for Semidefinite Programming}

\author{Marianna De Santis\fnref{diag}}
\ead{marianna.desantis@uniroma1.it}

\author{Franz Rendl\fnref{aau}}
\ead{franz.rendl@aau.at}

\author{Angelika Wiegele\fnref{aau}}
\ead{angelika.wiegele@aau.at}

\address[diag]{Dipartimento di Ingegneria Informatica Automatica e Gestionale, Sapienza Universit\`{a} di Roma, Via Ariosto, 25, 00185 Roma, Italy\fnref{label1}}
\address[aau]{Institut f\"ur Mathematik, Alpen-Adria-Universit\"at Klagenfurt, Universit\"atsstra{\ss}e 65-67, 9020 Klagenfurt, Austria \fnref{label2}\fnref{label3}}

\begin{abstract}
In the context of augmented Lagrangian approaches for solving semidefinite 
programming problems, we investigate the possibility of eliminating the positive semidefinite constraint on the dual matrix by employing a factorization. 
Hints on how to deal with the resulting unconstrained maximization of the augmented Lagrangian are given. We further use the approximate maximum of 
the augmented Lagrangian with the aim of improving the convergence rate of alternating direction augmented Lagrangian frameworks. 
Numerical results are reported, showing the benefits of the approach.\end{abstract}

\begin{keyword}
Semidefinite Programming, Alternating Direction Augmented Lagrangian method, theta function
\MSC[2010] 90C22 \sep 90C30 \sep 90C06
\end{keyword}

\end{frontmatter}
\section{Introduction}\label{sec:intro}
Semidefinite Programs (SDP) can be solved in polynomial time to some fixed prescribed precision, but the computational 
effort grows both with the number
$m$ of constraints and with the order $n$ of the underlying space of symmetric matrices. 
Interior point methods to solve SDP become impractical both in terms of computation time and memory requirements, 
once $m \geq 10^4$. 
Several algorithmic alternatives have been introduced in the literature, including some based on 
augmented Lagrangian approaches \cite{BuMo:03,BuMo:05,PoReWi:06,MaPoReWi:09,Wen2010}.
It is the purpose of this paper to elaborate on the 
alternating direction augmented Lagrangian (ADAL) algorithms proposed in
\cite{PoReWi:06,MaPoReWi:09} by introducing computational refinements.
The key idea will be to 
eliminate the positive semidefinite constraint on the dual matrix
by employing a factorization, so that the maximization of the 
augmented Lagrangian function with respect to the dual variables 
can be performed in an unconstrained fashion.

In the remainder of this section we give
the problem formulation and state our notations.
In Section~\ref{sec:AugmLag} a description
of the ADAL methods for solving
semidefinite programs is given. 
Details on how we maximize the augmented Lagrangian, 
after the factorization of the dual matrix, are given in Section~\ref{sec:SolveSub}.
In Section~\ref{dadal}, we outline 
our new algorithm~\texttt{DADAL}: an additional update of the dual variable
within one iteration of the ADAL method is used as improvement step.
The convergence of~\texttt{DADAL} easily follows by the analysis done
in~\cite{Wen2010}, that looks at ADAL as a fixed point method. We give
insights on how~\texttt{DADAL} can improve the convergence rate of ADAL in
Section~\ref{sec:conv}.
Section~\ref{sec:num} shows numerical results and Section~\ref{sec:concl} concludes.

\subsection{Problem Formulation and Notations}
Let $\Sy$ be the set of $n$-by-$n$ symmetric matrices and $\sdp~\subset~\Sy$ be the set of positive semidefinite matrices.
Denoting by $\left\langle X,Y\right\rangle = \mathrm{trace}(XY)$ the standard inner product in $\Sy$,
we write the standard primal-dual pair of SDP problems as follows:
\begin{equation}
\begin{array}{l l}
\min & \left\langle C,X\right\rangle\\
\mbox{ s.t. } & \Aa X = b, \\
 & X\in \sdp
\end{array}
\label{eq:primal}
\end{equation}
and
\begin{equation}
\begin{array}{l l}
\max  & b^Ty\\
\mbox{ s.t. } & C - \Aat y = Z \\
& Z\in \sdp,
\end{array}
\label{eq:dual}
\end{equation}
where $C\in\Sy$, $b\in\RR^m$, $\Aa: \Sy \rightarrow \RR^m$ is the linear operator 
$(\Aa X)_i = \left\langle A_i,X\right\rangle$ with $A_i\in\Sy$, $i=1,\ldots,m$ and 
$\Aat: \RR^m \rightarrow \Sy$ is its adjoint, $\Aat y = \sum_i y_i A_i$.

We assume that both problems have strictly feasible points, so that strong duality holds.
Under this assumption, $(X, y, Z)$ is optimal if and only if
\begin{equation}\label{eq:optCond}
 X\in \sdp, \quad \Aa X = b, \quad Z\in \sdp, \quad C - \Aat y = Z, \quad ZX = 0.
\end{equation}
We further assume that matrix $A$ has full rank.

Let $v\in \RR^n$ and $M\in \RR^{m\times n}$. In the following, we denote by
$\mathrm{vec}(M)$ the $mn$-dimensional vector formed by stacking the columns of $M$ on top of each other 
($\mathrm{vec}^{-1}$ is the inverse operation). We also denote by $\Diag{(v)}$ the diagonal matrix having 
$v$ in the diagonal. With $e_i$ we denote the $i$-th vector of the standard basis in $\RR^n$. 
Whenever a norm is used, we consider the Frobenius norm in case of matrices and the Euclidean norm 
in case of vectors. We denote the projection of some symmetric matrix $S$ onto
the positive semidefinite cone by $(S)_{+}$ and its projection onto the
negative semidefinite cone by $(S)_{-}$.

\section{Augmented Lagrangian Methods for SDP}\label{sec:AugmLag}
Let $X\in \Sy$ be the Lagrange multiplier for the dual equation 
$Z - C + \Aat y=0$.
In order to solve Problem~\eqref{eq:dual} with 
the augmented Lagrangian method we introduce
$$
L_\sigma(y,Z; X) := b^Ty - \langle Z - C + \Aat y, X\rangle -
\frac{\sigma}{2}\|Z - C + \Aat y\|^2.$$
To solve \eqref{eq:dual}, we deal with
\begin{equation}\label{eq:maxLagZ}
 \begin{array}{l l}
      \max  & L_\sigma(y,Z; X) \\
      \mbox{ s.t. } & y\in \RR^m, \quad Z\in \sdp,
\end{array}
\end{equation}
where $X$ is fixed and $\sigma>0$ is the penalty parameter.

Once Problem~\eqref{eq:maxLagZ} is (approximately) solved, the multiplier $X$ is updated by a first order rule:
\begin{equation}\label{eq:firstOrderUpdate}
 X = X + \sigma (Z -C +\Aat y)
\end{equation}
and the process is iterated until convergence, i.e. until the optimality conditions~\eqref{eq:optCond} are satisfied
within a certain tolerance (see Chapter 2 in \cite{Be:82} for further details).

Problem~\eqref{eq:maxLagZ} is a convex quadratic semidefinite optimization problem,
which is tractable but expensive to solve directly. Several simplified
versions have been proposed in the literature to quickly get approximate
solutions.

In the alternating direction framework proposed by Wen et al.\ \cite{Wen2010}, 
the augmented Lagrangian $L_\sigma(y,Z; X)$ is maximized
 with respect to $y$ and $Z$ one after the other. 
More precisely, at every iteration $k$, 
the new point $(X^{k+1}, y^{k+1}, Z^{k+1})$ is computed by the following steps:
\begin{equation} \label{eq:opty}
 y^{k+1} := \arg \max_{y\in \RR^m} L_{\sigma^k}(y,Z^{k}; X^{k}),
\end{equation}
\begin{equation} \label{eq:optZ}
 Z^{k+1} := \arg\max_{Z\in \sdp} L_{\sigma^k}(y^{k+1},Z; X^{k}),
\end{equation}
\begin{equation} \label{eq:updateX}
 X^{k+1} := X^{k} + \sigma^k (Z^{k+1} -C + \Aat y^{k+1}).
\end{equation}
These three steps are iterated until a stopping criterion is met.

The update of $y$ in~\eqref{eq:opty} is derived from the first-order optimality 
condition of Problem~\eqref{eq:maxLagZ}: $y^{k+1}$ 
is the unique solution of
\[
\nabla_y L_{\sigma^k} (y,Z^{k}; X^{k}) = b - \Aa( X^k + \sigma^k (Z^k -C + \Aat y)) = 0,
\]
that is
\[
y^{k+1}= (\Aa\Aat)^{-1}\Big(\frac{1}{\sigma^k} b - \Aa(\frac{1}{\sigma^k} X^k -C + Z^k)\Big).
\]
Then, the maximization in~\eqref{eq:optZ} is conducted by considering the equivalent problem
\begin{equation}\label{eq:proj}
 \min_{Z\in \sdp} \|Z + W^k\|^2,
\end{equation}
with $W^k = (\frac{X^k}{\sigma^k} - C + \Aat y^{k+1})$, or, in other words, by projecting $W^k\in \Sy$ 
onto the (closed convex) cone $\sdn$ and taking its additive inverse (see Algorithm~\ref{alg:BPM}). 
Such a projection is computed via the spectral decomposition of the matrix
$W^k$.

The Boundary Point Method proposed in~\cite{PoReWi:06, MaPoReWi:09} 
can also be viewed as an 
alternating direction augmented Lagrangian method. 
In fact, as also noted in~\cite{Wen2010}, the available implementation \texttt{mprw.m}
(see \url{https://www.math.aau.at/or/Software/}) is an alternating 
direction augmented Lagrangian method,
since in the inner loop (see Table~2 in \cite{PoReWi:06}) 
only one iteration is performed.

We report in Algorithm~\ref{alg:BPM}, the scheme of \texttt{mprw.m}.
The stopping criterion for \texttt{mprw.m} considers only the following primal and dual infeasibility errors:
\[
r_P= \frac{\|\Aa X - b\|}{1+\|b\|},
\]
\[
r_D= \frac{\|C - Z - \Aat y\|}{1+ \|C\|};
\]
as the other optimality conditions (namely, $X\in \sdp, Z\in \sdp, ZX=0$) 
are satisfied up to machine accuracy throughout the algorithm.
More precisely,
the algorithm stops as soon as the quantity
\[
\delta = \max \{ r_P, r_D\},
\]
is less than a fixed precision $\varepsilon>0$.
\begin{algorithm}                      
\caption{\small\tt Scheme of mprw.m}          
\label{alg:BPM} 
\begin{algorithmic} 
\par\vspace*{0.1cm}
\item$\,\,\,1$\hspace*{0.5truecm} {\bf Initialization:} Choose $\sigma>0$, $X\in \sdp$, $\varepsilon >0$. 
\item$\,\,\,$ \hspace*{2.75truecm}  Set $Z= 0$.
\item$\,\,\,2$\hspace*{0.5truecm} {\bf Repeat until } $\delta < \varepsilon$:
\item$\,\,\,3$\hspace*{1.0truecm} {\bf Compute} $y= (\Aa\Aat)^{-1}\Big(\frac{1}{\sigma} b - \Aa(\frac 1 \sigma Y -C + Z)\Big)$ 
\item$\,\,\,4$\hspace*{1.0truecm} {\bf Compute} $Z =-(X/\sigma -C + \Aat y)_-$ 
\item$\,\,\, $\hspace*{2.65truecm}  $X =\sigma(X/\sigma -C + \Aat y)_+$
\item$\,\,\,5$\hspace*{1.0truecm} {\bf Compute} $\delta = \max \{ r_P, r_D\}$
\item$\,\,\,6$\hspace*{1.0truecm} {\bf Update} $\sigma$ 
\par\vspace*{0.1cm}
\end{algorithmic}
\end{algorithm}

It is the main purpose of this paper to investigate enhancements to this
algorithm. The key idea will be to replace in the subproblem  
\eqref{eq:maxLagZ} the constraint $Z \in \sdp$ by $Z=VV^\top$ and considering
\eqref{eq:maxLagZ} as an unconstrained problem in $y$ and $V$.

\section{Solving the subproblem \eqref{eq:maxLagZ}}\label{sec:SolveSub}
By introducing a variable $V\in \RR^{n \times r}$ ($1 \leq r\leq n$), 
such that $Z = VV^\top \in \sdp$,
we reformulate Problem~\eqref{eq:maxLagZ} as the following unconstrained maximization problem
\begin{equation}\label{eq:maxLagV}
 \begin{array}{l l}
\max  & L_\sigma (y,V; X)\\
 \mbox{ s.t. } & y\in \RR^m, \quad V\in  \RR^{n \times r},
\end{array}
\end{equation}
where 
\[L_\sigma (y,V; X) = b^Ty - \langle VV^\top - C + \Aat y, X\rangle - 
\frac{\sigma}{2}\|VV^\top - C + \Aat y\|^2.\]
Note that the number of columns $r$ of matrix $V$ represents the rank of the dual variable $Z$.

The first-order necessary optimality conditions for Problem~\eqref{eq:maxLagV} state the following:
\begin{proposition}\label{prop:NecOptCond}
Let $(y^*, V^*)\in \RR^m \times \RR^{n \times r}$ be a stationary point for Problem~\eqref{eq:maxLagV}, then
\begin{equation}\label{eq:OptCondLag}
 \begin{array}{l l}
  \nabla_y L_\sigma (y^*,V^*; X) = b - \Aa( X + \sigma ({V^*V^*}^\top -C + \Aat y^*)) = 0,\\[1.2ex]
  \nabla_V L_\sigma (y^*,V^*; X) = -2 (X + \sigma ({V^*V^*}^\top -C + \Aat y^*))V^* = 0.
 \end{array}
\end{equation} 
\end{proposition}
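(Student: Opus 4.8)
The plan is to observe that Problem~\eqref{eq:maxLagV} is an unconstrained smooth maximization over the product space $\RR^m \times \RR^{n\times r}$, so any stationary point must annihilate both partial gradients; the whole task therefore reduces to computing $\nabla_y L_\sigma$ and $\nabla_V L_\sigma$ and setting them to zero. To streamline the bookkeeping I would first abbreviate the \emph{residual} by writing $R := VV^\top - C + \Aat y \in \Sy$, so that
\[
L_\sigma(y,V;X) = b^\top y - \langle R, X\rangle - \frac{\sigma}{2}\|R\|^2,
\]
and note that $R$ is symmetric, depends on $y$ only through the affine term $\Aat y$, and depends on $V$ only through $VV^\top$.

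For the $y$-gradient I would differentiate term by term. The linear term contributes $b$. In $\langle R, X\rangle$ only the piece $\langle \Aat y, X\rangle$ depends on $y$, and by the adjoint identity $\langle \Aat y, X\rangle = y^\top \Aa X$ its gradient is $\Aa X$. For the quadratic penalty the chain rule gives $\partial_{y_i}\frac{\sigma}{2}\|R\|^2 = \sigma\langle R, A_i\rangle = \sigma(\Aa R)_i$, i.e. gradient $\sigma \Aa R$. Collecting terms yields $\nabla_y L_\sigma = b - \Aa X - \sigma\Aa R = b - \Aa(X + \sigma R)$, which is precisely the first line of~\eqref{eq:OptCondLag} after re-expanding $R$.

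For the $V$-gradient the only delicate point is differentiating through $VV^\top$. Using $\langle VV^\top, X\rangle = \mathrm{trace}(V^\top X V)$ together with the symmetry of $X$, the $V$-gradient of $\langle R, X\rangle$ is $2XV$. For the penalty I would pass to the differential $dR = (dV)V^\top + V(dV)^\top$ and exploit the symmetry of $R$ to collapse the two mirror-image terms into $\langle R, dR\rangle = 2\langle RV, dV\rangle$, whence $\nabla_V \frac{\sigma}{2}\|R\|^2 = 2\sigma RV$. Since $b^\top y$ is independent of $V$, this produces $\nabla_V L_\sigma = -2XV - 2\sigma RV = -2(X + \sigma R)V$, matching the second line of~\eqref{eq:OptCondLag}.

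The argument is a direct matrix-calculus computation, so I do not expect a genuine obstacle. The only steps demanding care are the correct use of the adjoint relation $\langle \Aat y, X\rangle = y^\top \Aa X$ in the $y$-derivative, and the invocation of the symmetry of both $X$ and $R$ when reducing the two terms of $dR$ to the single factor $RV$ in the $V$-derivative.
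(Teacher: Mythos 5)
Your computation is correct: both gradient formulas follow exactly as you derive them, and the adjoint identity and the symmetry of $X$ and $R$ are indeed the only points requiring care. The paper in fact states Proposition~\ref{prop:NecOptCond} without proof, treating it as the routine first-order condition for the unconstrained problem~\eqref{eq:maxLagV}, so your direct matrix-calculus verification is precisely the standard argument the authors left implicit.
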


From~\eqref{eq:OptCondLag}, we can easily see that we can keep the optimality 
conditions with respect to $y$ satisfied, 
while moving $V$ along any direction $D_V\in  \RR^{n \times r}$:
\begin{proposition}\label{prop:yKeptOpt}
 Let $D_V\in  \RR^{n \times r}$. 
 Let 
\begin{equation}\label{eq:yKeptOpt}
y(V+\alpha D_V) = y_0 + \alpha y_1 + \alpha^2 y_2,
\end{equation} 
with
\[
 \begin{array}{l l}
  y_0 = (\Aa\Aat)^{-1}\Big(\frac{1}{\sigma} b - \Aa(\frac 1 \sigma X -C + VV^\top)\Big),\\[1.2ex]
  y_1 = (\Aa\Aat)^{-1}\Big(-\Aa(D_V V^\top + V D_V^\top)\Big),\\[1.2ex]
  y_2 = (\Aa\Aat)^{-1}\Big(-\Aa (D_V D_V^\top)\Big).
 \end{array}
\]
Then 
\[
\nabla_y L_\sigma (y(V+\alpha D_V),V + \alpha D_V; X) = 0, 
\]
for all $\alpha\in \RR$.
\end{proposition}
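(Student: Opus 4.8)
The plan is to verify directly that the quadratic-in-$\alpha$ expression for $y(V+\alpha D_V)$ is exactly the unique value of $y$ that makes $\nabla_y L_\sigma$ vanish at the shifted point $V+\alpha D_V$. By Proposition~\ref{prop:NecOptCond}, writing $\tilde V = V + \alpha D_V$, the condition $\nabla_y L_\sigma(y,\tilde V; X)=0$ reads
\[
b - \Aa\big(X + \sigma(\tilde V \tilde V^\top - C + \Aat y)\big) = 0,
\]
which is affine in $y$. Solving it requires inverting $\Aa\Aat$; the full-rank assumption on $\Aa$ guarantees that $\Aa\Aat$ is an invertible $m\times m$ matrix, so for each fixed $\tilde V$ the solution exists and is unique.

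First I would rearrange this stationarity equation into the closed form
\[
y = (\Aa\Aat)^{-1}\Big(\tfrac 1 \sigma b - \Aa\big(\tfrac 1 \sigma X - C + \tilde V \tilde V^\top\big)\Big),
\]
so that $y$ depends on the shift only through the matrix $\tilde V \tilde V^\top$. The single genuine computation is the expansion
\[
\tilde V \tilde V^\top = VV^\top + \alpha\,(V D_V^\top + D_V V^\top) + \alpha^2\, D_V D_V^\top,
\]
which groups the shift into powers of $\alpha$.

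Next I would substitute this expansion into the closed form and use the linearity of both $\Aa$ and $(\Aa\Aat)^{-1}$ to distribute the right-hand side across the three monomials in $\alpha$. The coefficient of $\alpha^0$ is precisely $y_0$, the coefficient of $\alpha^1$ is $y_1$, and the coefficient of $\alpha^2$ is $y_2$, which establishes $y(V+\alpha D_V)=y_0+\alpha y_1+\alpha^2 y_2$ for every $\alpha\in\RR$. Since this value was built as the solution of $\nabla_y L_\sigma=0$, the gradient vanishes at $(y(V+\alpha D_V),V+\alpha D_V)$ for all $\alpha$, as claimed. There is no real obstacle here: the statement is essentially an algebraic identity, and the argument is a direct verification; the only point needing care is bookkeeping of the symmetric cross term $V D_V^\top + D_V V^\top$, so that $\Aa$ is applied to the correct argument and $\tilde V \tilde V^\top$ remains symmetric throughout.
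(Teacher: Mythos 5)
Your proposal is correct and follows essentially the same route as the paper's proof: both solve the stationarity condition $\nabla_y L_\sigma = 0$ in closed form via $(\Aa\Aat)^{-1}$, expand $(V+\alpha D_V)(V+\alpha D_V)^\top$ into powers of $\alpha$, and use linearity of $\Aa$ to read off $y_0$, $y_1$, $y_2$. Your explicit remark that invertibility of $\Aa\Aat$ follows from the full-rank assumption on $\Aa$ is a small but welcome addition that the paper's proof leaves implicit.
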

\begin{proof}
Let $\alpha\in \RR$. From~\eqref{eq:OptCondLag}, we have that $\nabla_y L_\sigma (y,V; X) = 0$
 iff 
 \[
 \Aa\Aat y= \Big(\frac{1}{\sigma} b - \Aa(\frac 1 \sigma X -C + {VV}^\top)\Big).
 \]
 Therefore, when  $V = V + \alpha D_V$, we get
 \begin{align*}
  (\Aa\Aat) y =& \, \frac{1}{\sigma} b - \Aa\Big(\frac{1}{\sigma} X - 
  C + (V+\alpha D_V)(V + \alpha D_V)^\top \Big)\\[1.2ex]
  =& \, \frac{1}{\sigma} b - \Aa\Big(\frac{1}{\sigma} X - C + VV^\top\Big) \\[1.2ex]
   & \,- \alpha \Aa(D_V V^\top + V D_V^\top) -\alpha^2 \Aa(D_V D_V^\top).
  \end{align*}
 By multiplying both the l.h.s. and the r.h.s. with $(\Aa\Aat)^{-1}$, we get the expression in~\eqref{eq:yKeptOpt} and the proposition is proven.
\end{proof}
Thanks to Proposition~\ref{prop:yKeptOpt}, we can maximize 
$L_\sigma (y,V; X)$ with respect to $V$, 
keeping variable $y$ updated according to~\eqref{eq:yKeptOpt} along the 
iterations. Thus we are in fact maximizing a polynomial of degree 4 in
$V$.

\subsection{Direction Computation}\label{sec:ScaledDir}
In order to compute an ascent direction for the augmented Lagrangian we consider two possibilities.
Either we use the gradient of $L_\sigma(y,V; X)$ with respect to $V$, or we use the gradient scaled 
with the inverse of the diagonal of the  Hessian of $L_\sigma (y,V; X)$. 
We recall the gradient of $L_\sigma$ with respect to $V$, see 
\eqref{eq:OptCondLag}, 
 as the $n \times r$ matrix 
\[  
\nabla_V L_\sigma (y,V; X) = - 2(M + \sigma VV^\top)V,\]
where $M=X + \sigma(\Aat y -C )$.
In order to compute the generic $(s,t)$ entry on the main 
diagonal of the Hessian, we consider 
\[\lim_{t\rightarrow 0} \,\frac{1}{t}\, \left(f_{st}(V + t e_s e_t^\top) - f_{st}(V)\right), \]
where $f_{st}(V) := e_s ^\top VV^\top V e_t,\;\; s\in \{1,\ldots, n\}, \;t\in \{1, \ldots, r\}$.  

We get
\[
\frac{\partial^{2}L_{\sigma}(y,V; X)}{\partial v_{s,t} \partial
  v_{s,t}} = -2m_{ss} -2\sigma\left( (e_{s}^\top Ve_{t})^2 + \|V^\top e_{s} \|^{2} +\|Ve_{t}\|^{2} \right). 
\]

We define the $n \times r$ matrix $H$ by
$$
   (H)_{s,t} = 2\max \{ 0, m_{ss} \}\, +  
   2\sigma\, \big((e_s^\top V e_t)^2 + \|V^\top e_s\|^2 +\|V e_t\|^2\big),
$$
and we propose to use the search direction $D_{V}$, given by the gradient 
scaled by $H$, thus
\begin{equation}\label{eq:dir}
(D_{V})_{s,t} := \frac{(\nabla_{V}L_{\sigma})_{s,t}}{H_{s,t}}.
\end{equation}
We note that $H$ is generically positive, as $V$ should not contain columns
all equal to zero. In practice, 
we compute the scaled gradient direction in case $\|\nabla_V L_\sigma (y,V; X)\| < 10^{-3}$.
\subsection{Exact Linesearch}
Given a search direction $D_{V}$, we note that 
$L_\sigma (y(V + \alpha D_{V}),V + \alpha D_{V}; X)$ is a polynomial of degree 4 
in $\alpha$, 
so that we can interpolate five different points 
\[(\alpha_i, \; 
L_\sigma (y(V + \alpha_i D_V),V + \alpha_i D_V; X)\,), \quad i= 1,\ldots, 5,\]
to get its analytical expression. 
This also means that the maximum of $L_\sigma (y(V + \alpha D_V),V + \alpha
D_V; X)$ can be detected analytically
(using the Cardano formula).
In practice, we evaluate the $4$-degree polynomial 
$L_\sigma (y(V + \alpha D_V),V + \alpha D_V; X)$ in few thousands of points in $(0,10)$ and take the best $\alpha$.

Algorithm~\ref{alg:OneItUncMax} is a scheme of the generic iteration we
perform to maximize $L_\sigma(y,V; X)$, $X$ being fixed.
\begin{algorithm}                      
\caption{\small\tt Solving \eqref{eq:maxLagZ} approximately}          
\label{alg:OneItUncMax} 
\begin{algorithmic} 
\par\vspace*{0.1cm}
\item$\,\,\,1$\hspace*{0.5truecm} {\bf Input:}  $\sigma>0$, $y\in \RR^m$, 
  $V\in \RR^{n\times r}$ 
\item$\,\,\,2$\hspace*{0.5truecm} {\bf Repeat until} $\| \nabla_{V}
  L_{\sigma}\|<\epsilon_{inner}$ 
\item$\,\,\,3$\hspace*{0.5truecm} {\bf Compute} the search direction $D_V\in
  \RR^{n\times r}$ using \eqref{eq:dir}
\item$\,\,\,4$\hspace*{0.5truecm} {\bf Compute} optimal stepsize $\alpha$ 
\item$\,\,\,5$\hspace*{0.5truecm} {\bf Update} $y = y(V + \alpha D_V)$ according to~\eqref{eq:yKeptOpt}\\
\hspace*{2.1truecm}and $V = V + \alpha D_V$.
\par\vspace*{0.1cm}
\end{algorithmic}
\end{algorithm}

\section{DADAL: a dual step for 
improving alternating direction augmented Lagrangian methods for SDP }\label{dadal}
Our idea is to insert the approximated solution of Problem~\eqref{eq:maxLagZ} obtained from Algorithm~\ref{alg:OneItUncMax},
as a simultaneous update of $y$ and $V$ to be performed before the projection step in Algorithm~\ref{alg:BPM}.
We detail below the scheme of the ADAL method 
where this ``dual step'' is inserted.
We refer to Algorithm~\ref{alg:BPMComb} as \texttt{DADAL}. 
\begin{algorithm}                      
\caption{\small\tt DADAL}          
\label{alg:BPMComb} 
\begin{algorithmic} 
\par\vspace*{0.1cm}
\item$\,\,\,1$\hspace*{0.5truecm} {\bf Initialization:} Choose $\sigma>0$, $r>0$, $\varepsilon >0$,
\item$\,\,\,$\hspace*{3.0truecm} $X\in \sdp$, $V\in \R^{n\times r}$, $Z=VV^{\top}$, $y\in \R^m$.
\item$\,\,\,2$\hspace*{0.5truecm} {\bf Repeat until } $\delta < \varepsilon$:
\item$\,\,\,3$\hspace*{1.0truecm} {\bf Update} $(y,V)$ by Algorithm~\ref{alg:OneItUncMax}
\item$\,\,\,4$\hspace*{1.0truecm} {\bf Compute} 
$Z = - (X/\sigma -C +\Aat y)_- $
\item$\,\,\,$\hspace*{2.6truecm}  $X =\sigma(X/\sigma - C + \Aat y)_+$
\item$\,\,\,5$\hspace*{1.0truecm} {\bf Update} $r\leftarrow \mathrm{rank(Z)}$ and $V$ so that $VV^\top = Z$
\item$\,\,\,6$\hspace*{1.0truecm} {\bf Compute} $\delta = \max\{ r_P, r_D\}$
\item$\,\,\,7$\hspace*{1.0truecm} {\bf Update} $\sigma$ 
\par\vspace*{0.1cm}
\end{algorithmic}
\end{algorithm}

\subsection{Convergence Analysis}\label{sec:conv}
The convergence analysis of \texttt{DADAL} may follow from the analysis in~\cite{Wen2010}, 
that looks at ADAL as a fixed point method.
One iteration of the ADAL method can be seen as the result of the combination of two operators,
namely $(Z^{k+1},X^{k+1}) = \mathscr{P}(W(Z^{k}, X^{k}))$, where $\mathscr{P}$ denotes the projection performed
at Step 4 in Algorithm~\ref{alg:BPM}
and $W(Z^{k}, X^{k}) = X^k/\sigma - C + \Aat y(Z^{k}, X^{k})$, being   
$y(Z^{k}, X^{k})= (\Aa\Aat)^{-1}\Big(b/\sigma^k - \Aa(X^k/\sigma^k -C + Z^k)\Big)$.
It can be shown that $\mathscr{P}$ and $W$ are non-expansive operators (see~\cite{Wen2010} for further details). 
Hence, the key step in the proof of Theorem 2 in~\cite{Wen2010} states the following
\begin{align} \label{eq:proofWGY}
 \|(Z^{k+1},X^{k+1}/\sigma) - (Z^{*}, X^{*}/\sigma)\| & = 
 \|\mathscr{P}(W(Z^{k}, X^{k})) - \mathscr{P}(W(Z^{*}, X^{*}))\| \nonumber \\ 
 & \le \| W( Z^{k}, X^{k}) - W(Z^{*}, X^{*}) \| \\
 & \le \| (Z^{k}, X^{k}/\sigma) - (Z^{*}, X^{*}/\sigma) \|.\nonumber
\end{align}
In \texttt{DADAL}, before the projection step, the dual variables are updated 
by performing one maximization step for the augmented Lagrangian, so that we get 
$(\hat y^k, \hat V^k)$ from $(y^k, V^k)$ and, in particular, the dual matrix is given as 
$\hat Z^k = \hat V^k \hat{V^k}^\top$. 
\begin{proposition}\label{prop:deltabar}
 Let $\hat Z, Z, X, X^*, Z^* \in \Sy$ and let $Z\neq Z^*$.
 Let \[\|\hat Z - Z^*\|^2 \leq \rho \|Z - Z^*\|^2,\] with $0<\rho\leq 1$.
 Then $\bar\rho$ exists, $\bar\rho \leq 1$, $\bar\rho \geq \rho$ such that
 \[\| (\hat Z, X) - (Z^{*}, X^{*}) \|^2 \leq \bar\rho \| (Z, X) - (Z^{*}, X^{*}) \|^2\]
\end{proposition}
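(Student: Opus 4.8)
The plan is to produce $\bar\rho$ explicitly as a ratio and then verify the three requirements ($\bar\rho\le 1$, $\bar\rho\ge\rho$, and the stated inequality) by direct comparison. Interpreting the norm on pairs in the usual block-wise way, $\|(A,B)\|^2=\|A\|^2+\|B\|^2$, the target inequality reads
\[
\|\hat Z-Z^*\|^2+\|X-X^*\|^2\ \le\ \bar\rho\,\big(\|Z-Z^*\|^2+\|X-X^*\|^2\big).
\]
Since $Z\neq Z^*$ guarantees $\|Z-Z^*\|^2+\|X-X^*\|^2>0$, I would simply \emph{define}
\[
\bar\rho:=\frac{\rho\,\|Z-Z^*\|^2+\|X-X^*\|^2}{\|Z-Z^*\|^2+\|X-X^*\|^2},
\]
i.e.\ replace $\|\hat Z-Z^*\|^2$ by its hypothesised upper bound $\rho\,\|Z-Z^*\|^2$ in the numerator.

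With this choice the three checks reduce to one-line algebra. First, the main inequality is immediate: by hypothesis $\|\hat Z-Z^*\|^2\le\rho\,\|Z-Z^*\|^2$, so adding the common term $\|X-X^*\|^2$ to both sides yields precisely $\bar\rho$ times the denominator. Second, abbreviating $b:=\|Z-Z^*\|^2>0$ and $c:=\|X-X^*\|^2\ge 0$, I would compute
\[
1-\bar\rho=\frac{(1-\rho)\,b}{b+c}\ \ge 0,
\qquad
\bar\rho-\rho=\frac{(1-\rho)\,c}{b+c}\ \ge 0,
\]
where both bounds use only $0<\rho\le 1$; hence $\rho\le\bar\rho\le 1$ as required.

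There is essentially no obstacle here: the statement is an elementary fact about how a contraction on one block of a product space degrades to a (weaker) contraction on the whole pair, the point being that appending the common, unchanged block $X-X^*$ to both sides can only pull the contraction factor toward $1$. The only things to be careful about are the role of the hypothesis $Z\neq Z^*$, which is used solely to keep the denominator positive so that $\bar\rho$ is well defined, and the recognition that the natural candidate for $\bar\rho$ is the one obtained by saturating the given bound on $\|\hat Z-Z^*\|^2$; this is exactly what makes $\bar\rho\ge\rho$ come from the $c$-term and $\bar\rho\le 1$ come from the $b$-term simultaneously.
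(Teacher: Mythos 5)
Your proof is correct and takes essentially the same route as the paper: your explicit ratio $\bar\rho=\bigl(\rho\,\|Z-Z^*\|^2+\|X-X^*\|^2\bigr)/\bigl(\|Z-Z^*\|^2+\|X-X^*\|^2\bigr)$ is algebraically identical to the paper's choice $\bar\rho=\rho+(1-\rho)\,c(X,Z)$ with $c(X,Z)=\|X-X^*\|^2/\|(Z,X)-(Z^*,X^*)\|^2$, and both arguments verify the main inequality by substituting the hypothesised bound $\rho\,\|Z-Z^*\|^2$ for $\|\hat Z-Z^*\|^2$ in the block decomposition of the pair norm, using $Z\neq Z^*$ only to keep the denominator positive. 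No changes are needed.
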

\begin{proof}
\begin{align} \label{eq:proof-deltabar}
\| (\hat Z, X) - (Z^{*}, X^{*}) \|^2 & = \|\hat Z - Z^{*}\|^2 + \|X - X^{*}\|^2 \nonumber\\
& \leq \rho \|Z - Z^{*}\|^2 + \|X - X^{*}\|^2 \nonumber\\
& = \bar\rho \| (Z, X) - (Z^{*}, X^{*}) \nonumber\|^2,
\end{align}
where $\bar\rho = \rho + \varepsilon$, with 
\[
\varepsilon = (1 - \rho)\, c(X,Z),
\]
and  \[ c(X,Z) := \frac{\|X - X^{*}\|^2}{\| (Z, X) - (Z^{*}, X^{*}) \|^2}.\]
Since $0\leq c(X,Z)<1$, we have $0\leq \varepsilon < (1-\rho)$, so that $\bar\rho \leq 1$ and $\bar\rho \geq \rho$.
\end{proof}
Note that, if $\rho <1$, we have that $\bar\rho = \rho + \varepsilon < 1$.
\begin{theorem}
Let $(\hat y^k, \hat Z^k)$ be the dual variables obtained from $(y^k, Z^k)$ by performing one iteration 
of Algorithm~\ref{alg:OneItUncMax}. Let the direction $D_V$ in 
Algorithm~\ref{alg:OneItUncMax}, be chosen such that 
\begin{equation}\label{CondDv}
\| \hat Z^k - Z^*\|\le \delta \| Z^k - Z^*\|, 
\end{equation}
with $\delta \le 1$. Then the sequence $\{X^k, y^k, Z^k\}$ generated by Algorithm~\ref{alg:BPMComb} converges 
to a solution $\{X^*, y^*, Z^*\}$.
\end{theorem}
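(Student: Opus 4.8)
The plan is to recognize one full iteration of \texttt{DADAL} as the standard ADAL fixed-point map $\mathscr{P}\circ W$ applied to the \emph{dual-stepped} iterate, and then to show that the dual step preserves (and, when $\delta<1$, strengthens) the non-expansiveness toward the solution that already drives the proof of Theorem~2 in~\cite{Wen2010}. First I would verify that, after Step~3, the multiplier $\hat y^k$ coincides with $y(\hat Z^k,X^k)$: by Proposition~\ref{prop:yKeptOpt} the update~\eqref{eq:yKeptOpt} keeps $\nabla_y L_\sigma=0$ throughout Algorithm~\ref{alg:OneItUncMax}, so $\hat y^k$ is exactly the optimal-in-$y$ value associated with $\hat Z^k$ and $X^k$. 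Consequently the projection in Step~4 reads $(Z^{k+1},X^{k+1})=\mathscr{P}(W(\hat Z^k,X^k))$, i.e.\ the same operator as in~\eqref{eq:proofWGY}, merely evaluated at $(\hat Z^k,X^k)$ rather than at $(Z^k,X^k)$. I would also record that the solution is a common fixed point: at $(X^{*},y^{*},Z^{*})$ the conditions~\eqref{eq:optCond} give $Z^{*}-C+\Aat y^{*}=0$ and $Z^{*}X^{*}=0$, hence $X^{*}V^{*}=0$, so the stationarity in $V$ reduces to $-2X^{*}V^{*}=0$; the dual step therefore leaves the solution unchanged ($\hat Z^{*}=Z^{*}$) and $\mathscr{P}(W(Z^{*},X^{*}))=(Z^{*},X^{*})$.

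With this identification the estimate is a two-link chain. Applying the non-expansiveness of $\mathscr{P}$ and $W$ from~\eqref{eq:proofWGY} at the dual-stepped point gives
\[
\|(Z^{k+1},X^{k+1}/\sigma)-(Z^{*},X^{*}/\sigma)\|\le\|W(\hat Z^k,X^k)-W(Z^{*},X^{*})\|\le\|(\hat Z^k,X^k/\sigma)-(Z^{*},X^{*}/\sigma)\|.
\]
Then I would invoke Proposition~\ref{prop:deltabar} with $\rho=\delta^2$, whose hypothesis is precisely~\eqref{CondDv} squared, and with $X$ replaced by $X^k/\sigma$, obtaining some $\bar\rho\le 1$ with
\[
\|(\hat Z^k,X^k/\sigma)-(Z^{*},X^{*}/\sigma)\|^2\le\bar\rho\,\|(Z^k,X^k/\sigma)-(Z^{*},X^{*}/\sigma)\|^2.
\]
Chaining the two displays shows that $d_k:=\|(Z^k,X^k/\sigma)-(Z^{*},X^{*}/\sigma)\|^2$ is non-increasing, so $\{(Z^k,X^k)\}$ is Fej\'er monotone with respect to the solution and, in particular, bounded.

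Finally I would close the argument exactly as in~\cite{Wen2010}: the monotone sequence $d_k$ converges, every limit point of the bounded sequence $\{(Z^k,X^k)\}$ is a fixed point of $\mathscr{P}\circ W$ and hence a solution by~\eqref{eq:optCond}, and Fej\'er monotonicity toward the solution set forces the whole sequence to converge to a single $(X^{*},y^{*},Z^{*})$; the corresponding $y^k=y(Z^k,X^k)$ then converges by continuity of $(\Aa\Aat)^{-1}\Aa$. The main obstacle I anticipate is the passage from the \emph{per-iteration} non-expansiveness ($\bar\rho\le 1$, possibly $=1$ when $\delta=1$ or when the scalar $c(X,Z)$ of Proposition~\ref{prop:deltabar} approaches $1$) to genuine convergence: unlike~\cite{Wen2010}, the map applied at iteration $k$ is not a single fixed operator but incorporates the data-dependent dual direction $D_V$, so I must check that the non-expansive bound holds toward the \emph{same} solution at every $k$ and that this uniformity (together with the asymptotic regularity supplied by the fixed projection $\mathscr{P}$) suffices to run Wen et al.'s limiting argument verbatim. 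The reassuring by-product is that whenever the dual step makes genuine progress ($\delta<1$) one gets $\bar\rho<1$, i.e.\ a strictly sharper contraction than plain ADAL.
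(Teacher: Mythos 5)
Your proposal follows essentially the same route as the paper: you identify the post-dual-step projection as $\mathscr{P}(W(\hat Z^k,X^k))$, invoke Proposition~\ref{prop:deltabar} (correctly reading its hypothesis as~\eqref{CondDv} squared, with $X$ replaced by $X^k/\sigma$) to extend the chain of inequalities~\eqref{eq:proofWGY} by the factor $\bar\delta$, and then defer to the limiting argument of Theorem~2 in~\cite{Wen2010}, exactly as the paper does. The extra details you supply --- that Proposition~\ref{prop:yKeptOpt} guarantees $\hat y^k = y(\hat Z^k, X^k)$, that the solution is a fixed point of the dual step, and the explicit Fej\'er-monotonicity closing --- are sound elaborations of steps the paper leaves implicit, not a different proof.
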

\begin{proof}
Since condition~\eqref{CondDv} holds, we can apply Proposition~\ref{prop:deltabar}, so that $\bar \delta$ exists, $\delta \leq \bar\delta\leq 1$, such that 
\[
\| (\hat Z^{k}, X^{k}/\sigma) - (Z^{*}, X^{*}/\sigma) \| \le \bar \delta \| (Z^{k}, X^{k}/\sigma) - (Z^{*}, X^{*}/\sigma) \|.
\]
Then, the series of inequalities~\eqref{eq:proofWGY} 
can be extended as:
\begin{align*}
 \|(Z^{k+1},X^{k+1}/\sigma) - (Z^{*}, X^{*}/\sigma)\| & = \|\mathscr{P}(W(\hat Z^{k}, X^{k})) - 
 \mathscr{P}(W(Z^{*}, X^{*}))\| \\
 & \le \| W(\hat Z^{k}, X^{k}) - W(Z^{*}, X^{*}) \| \\
 & \le \| (\hat Z^{k}, X^{k}/\sigma) - (Z^{*}, X^{*}/\sigma) \| \\
 & \le \bar \delta \| (Z^{k}, X^{k}/\sigma) - (Z^{*}, X^{*}/\sigma) \|.
\end{align*}
The rest of the proof follows the same arguments as those in Theorem 2 in~\cite{Wen2010}.
\end{proof}
\par\noindent
Note that $\delta < 1$ in~\eqref{CondDv} implies $\bar \delta <1$ and the additional step of maximizing the augmented Lagrangian is
strictly improving the convergence rate of ADAL methods.
\begin{remark}
When dealing with unconstrained optimization problems, it is well known that the sequence $\{x^k\}$ produced by Newton's method 
converges superlinearly to a stationary point $x^*$ if the starting 
point $x^0$ is sufficiently close to $x^*$ (see e.g. Prop 1.4.1 in \cite{Be:99}).
Therefore, condition~\eqref{CondDv} is satisfied when, e.g., $D_V$ is chosen as the Newton direction and
our starting $V$ in Algorithm~\ref{alg:OneItUncMax} is in a neighborhood of the optimal solution. 
Assumption~\eqref{CondDv} is in fact motivating our direction computation: as soon as we are close enough to an optimal solution, 
we try to mimic the Newton direction by scaling the gradient with the inverse of the diagonal of the Hessian~\eqref{eq:dir}.
\end{remark}

\subsection{Choice and Update of the Penalty Parameter}\label{sec:sigma}
Let $(y^0, Z^0, X^0)$ be our starting solution.
In defining a starting penalty parameter $\sigma^0$, 
i.e. a starting value for scaling the violation of the dual equality constraints in the 
augmented Lagrangian, we might want to take into account the dual infeasibility error $r_D$ 
(defined in Section~\ref{sec:AugmLag}) 
at the starting solution.
Since the penalty parameter enters in the update of the primal solution~\eqref{eq:updateX} as well,
we can tune $\sigma^0$ so that the starting primal and dual infeasibility errors
are balanced.

Our proposal is to use the following as starting penalty parameter:
\begin{equation}\label{eq:sigma0}
\sigma^0 = \frac{r_P}{r_D}\frac{\|\Aa X^0 - b\|}{\|C - Z^0 - \Aat y^0\|}
\end{equation}
It has been noticed that the update of the penalty parameter $\sigma$ 
is crucial for the computational performances of ADAL methods for SDPs~\cite{PoReWi:06, MaPoReWi:09, Wen2010}.
Therefore, in order to improve the numerical perfomance of \texttt{DADAL}, strategies to dynamically 
adjust the value of $\sigma$ may be considered: In the implementation of \texttt{DADAL} used in our numerical experience (see
Section~\ref{sec:num}), we adopt the following strategy. 

We monitor the primal and dual relative errors, and note that the stopping
condition of the augmented Lagrangian method is given by $r_{D} \leq 
\varepsilon$, provided that the inner problem is solved with reasonable
accuracy. In order to improve the numerical performance, we use the following
intuition. 
If $r_{D}$ is much smaller than $r_{P}$ (we test for $100r_{D}< r_{p}$), this
indicates that $\sigma$ is too big in the subproblem. 
On the other hand, if $r_{D}$ is much larger than $r_{p}$ 
(we test for $2r_{D} > r_{P}$), then $\sigma$ should be increased to
facilitate overall progress. If either of these two conditions
occurs consecutively for several iterations, we change
$\sigma$ dividing or multiplying it by 1.3. Similar heuristics have been suggested also in 
\cite{Wen2010} and \cite{MaPoReWi:09}.


\section{Numerical Results}\label{sec:num}
In this section we report our numerical experience:
we compare the performance of \texttt{DADAL} and \texttt{mprw.m} on randomly 
generated instances, on instances from 
the SDP problem underlying the Lov\'asz theta number of a graph 
and on linear ordering problem instances.
Both \texttt{mprw.m} and \texttt{DADAL} are implemented in MATLAB~R2014b and are available at~\url{https://www.math.aau.at/or/Software/}. 
In our implementation of Algorithm~\ref{alg:BPMComb}, we use the choice and update strategy for the penalty
parameter $\sigma$ described in Section~\ref{sec:sigma}
and we perform two iterations of Algorithm~\ref{alg:OneItUncMax} in order to update $(y,V)$ in Step~3.
We set the accuracy~level~$\varepsilon~=~10^{-5}$.

We also report the run time of the interior point method for SDP using the solver \texttt{MOSEK}~\cite{mosek}.
The experiments were carried out on an Intel Core i7 processor running at
3.1~GHz under Linux.
\subsection{Comparison on randomly generated instances}
The random instances considered in the first experiment are some of those used in~\cite{MaPoReWi:09} 
(see Table~\ref{tab:randinst}). 
For these instances, the Cholesky factor of $\Aa \Aat$ is computed once and then used along the iterations
in order to update the dual variable $y$.

%
As can be seen in Table~\ref{tab:resRand}, interior point methods are not able to solve instances with more than $30000$ 
constraints due to memory limitations.
For what concerns \texttt{DADAL}, the number of iterations decreases.
However, we observe that a decrease in the
number of iterations does not always correspond to an improvement in the computational 
time: This suggests that the update of 
the dual variables performed at Step~3 in~\texttt{DADAL} may be too expensive.
According to the MATLAB profiling of our code, this is due to the need of solving three
linear systems for the update of $y$ (see Proposition~\ref{prop:yKeptOpt}).
\begin{table}
\small{
\centering      
  \begin{tabular}{|c||r|r|r|r|} 
    \hline
    \hline    
   Problem &  $n$   &  $m$   &  p & seed \\
  \hline
  \hline      
  P1 & 300 &   20000 & 3 &    3002030 \\    
  P2 & 300 &   25000 & 3 &    3002530 \\
  P3 & 300 &   10000 & 4 &    3001040 \\
  \hline
  P4 & 400 &   30000 & 3 &    4003030 \\   
  P5 & 400 &   40000 & 3 &    4004030 \\  
  P6 & 400 &   15000 & 4 &    4001540 \\ 
  \hline
  P7 & 500 &   30000 & 3 &    5003030 \\  
  P8 & 500 &   40000 & 3 &    5004030 \\  
  P9 & 500 &   50000 & 3 &    5005030 \\  
  P10 & 500 &   20000 & 4 &    5002040 \\
  \hline
  P11 & 600 &   40000 & 3 &    6004030 \\
  P12 & 600 &   50000 & 3 &    6005030 \\
  P13 & 600 &   60000 & 3 &   6006030 \\
  P14 & 600 &   20000 & 4 &    6002040 \\
  \hline
  P15 & 700 &   50000 & 3 &    7005030 \\  
  P16 & 700 &   70000 & 3 &    7007030 \\
  P17 & 700 &   90000 & 3 &    7009030 \\
  \hline
  P18 & 800 &   70000 & 3 &    8007030 \\  
  P19 & 800 &  100000 & 3 &   80010030 \\
  P20 & 800 &  110000 & 3 &   80011030 \\
   \hline
  \hline
 \end{tabular}\vspace*{0.1cm}
 \caption{Randomly generated instances.}
 \label{tab:randinst}  
 }
\end{table}
 \begin{table}
 \small{
 \centering      
 \begin{tabular}{|c|r||r|r||r|r|} 
   \hline
   \hline
   \multicolumn{1}{|c|}{} & \multicolumn{1}{|c||}{\texttt{MOSEK}}
   & \multicolumn{2}{c||}{\texttt{mprw.m}} & 
    \multicolumn{2}{c|}{\texttt{DADAL}}\\
   
  Problem & time(s) &  iter & time(s) & iter & time(s) \\
  \hline
  \hline
  P1   & 633.1 &   340 &    \textbf{10.1} &     \textbf{68} &    10.2\\ 
  P2   & 2440.4 &   427 &    33.1 &    \textbf{76} &    \textbf{31.2}\\ 
  P3   &  129.1 &   260 &    \textbf{11.3} &  \textbf{146} &    32.6\\ 
  \hline
  P4   & 7514.6 &   306 &    \textbf{13.3} &  \textbf{101} &    18.9\\
  P5   & - &   376 &    \textbf{51.4} &  \textbf{73} &    56.3\\ 
  P6   & 375.7 &   255 &    \textbf{19.5} &  \textbf{187} &    72.9\\
  \hline
  P7   & 7388.3 &   268 &    \textbf{11.5} &  \textbf{151} &    23.1\\ 
  P8   & - &   289 &    \textbf{15.2} &  \textbf{130} &    30.7\\  
  P9   & - &   319 &    \textbf{35.3} &  \textbf{111} &    74.4\\  
  P10  & 886.0 &   251 &    \textbf{26.9} &  \textbf{222} &    119.8\\ 
  \hline
  P11  & - &   266 &    \textbf{17.3} & \textbf{177} &    41.9\\  
  P12  & - &   275 &    \textbf{18.6} & \textbf{148} &    42.1\\  
  P13  & - &   293 &    \textbf{28.4} &  \textbf{132} &    68.7\\
  P14  & 1156.3 &   \textbf{249} &    \textbf{20.3} &  270 &    116.2\\  
  \hline
  P15  & - &   262 &    \textbf{22.4} &  \textbf{207} &    83.7\\ 
  P16  & - &   278 &    \textbf{30.1} &  \textbf{151} &    79.2\\ 
  P17  & - &   303 &    \textbf{74.2} &  \textbf{128} &   181.5\\  
  \hline
  P18  & - &   264 &    \textbf{34.3} &  \textbf{207} &  111.2\\ 
  P19  & - &   285 &    \textbf{53.3} & \textbf{157} &   126.4\\
  P20  & - &   296 &    \textbf{80.9} & \textbf{133} &   168.6\\
     \hline
     \hline
 \end{tabular}\vspace*{0.1cm}
 \caption{Comparison on randomly generated instances.}
 \label{tab:resRand}
 }
\end{table}

\subsection{Computation of the Lov\'asz theta number}
Given a graph $G$, let $V(G)$ and $E(G)$ be its set of vertices and its set of edges, 
respectively. The Lov\'asz theta number $\theta(G)$ of $G$ is defined as the optimal value of the following 
SDP problem:
\begin{align*}\label{theta}
 \max & \,\left\langle J, X \right\rangle \nonumber \\
 \mbox{ s.t. } & X_{ij} = 0, \quad \forall \, ij \in E(G),\\
  & \mathrm{trace X} = 1, \quad  X\in \sdp,\nonumber 
\end{align*}
where $J$ is the matrix of all ones.

In Table~\ref{tab:inst}, we report the dimension and the optimal value of the theta instances considered, 
obtained from some random graphs of the Kim-Chuan Toh collection~\cite{To:03}. 
As in the case of randomly generated instances, \texttt{MOSEK} can only solve instances with $m< 25000$.

In the case of theta instances $\Aa \Aat$ is a diagonal matrix, so that the update of 
the dual variable $y$ turned out to be less expensive with respect to the case of random instances.
We can see in Table~\ref{tab:theta} the benefits of using \texttt{DADAL}:
the improvements both in terms of number of iterations and in terms of computational time are evident.
We want to underline that for specific instances different tuning of the parameters in~\texttt{DADAL} can further improve the running time.
In Table~\ref{tab:thetaSigmaBest}, we report the results we obtained with~\texttt{DADAL} keeping $\sigma$ fixed to $\sigma^0$ along the
iterations and performing only one iteration of Algorithm~\ref{alg:OneItUncMax} in order to update $(y,V)$ in Step~3: this turned out to be a better choice for 
these instances and resulted in even a better perfomance compared to~\texttt{mprw.m}.  
\begin{table}
\small{
\centering      
  \begin{tabular}{|c||r|r|r|} 
   \hline
   \hline    
   Problem &  $n$   &  $m$   &  $\theta(G)$ \\
  \hline
  \hline
    $\theta$-62 & 300 &   13389 &  29.6413 \\
    $\theta$-82 & 400 &   23871 &  34.3669 \\
    $\theta$-102 & 500 &   37466 & 38.3906 \\
    $\theta$-103 & 500 &   62515 &  22.5286 \\
    $\theta$-104 & 500 &   87244 &  13.3363 \\
    $\theta$-123 & 600 &   90019 &  24.6687 \\
    $\theta$-162 & 800 &   127599 &  37.0097 \\
   $\theta$-1000 & 1000 &  249750 & 31.8053 \\
   $\theta$-1500 & 1500 &  562125 &  38.8665 \\
   $\theta$-2000 & 2000 &  999500 &  44.8558 \\ 
  \hline
  \hline
 \end{tabular}\vspace*{0.1cm}
 \caption{Instances from the Kim-Chuan Toh collection~\cite{To:03}.}
 \label{tab:inst}  
 }
\end{table}
\begin{table}
\small{
\centering      
  \begin{tabular}{|c||r||r|r||r|r|} 
   \hline
   \hline  
   \multicolumn{1}{|c||}{}
    & \multicolumn{1}{c||}{\texttt{MOSEK}} 
    & \multicolumn{2}{c||}{\texttt{mprw.m}} & 
   \multicolumn{2}{c|}{\texttt{DADAL}}\\
   Problem &   time(s) & iter & time(s) & iter & time(s)\\
  \hline
  \hline
    $\theta$-62 &  205.1 &   790 &     9.4 &    \textbf{173} &     \textbf{6.8}\\
    $\theta$-82 &  1174.5 &   821 &    18.2 &    \textbf{143} &     \textbf{11.1}\\
    $\theta$-102 &  - & 852 &    32.2 &    \textbf{163} &     \textbf{20.7}\\
    $\theta$-103 &  - & 848 &    33.8 &    \textbf{192} &     \textbf{25.1}\\
    $\theta$-104 &  - & 873 &    34.0 &    \textbf{290} &     \textbf{32.9}\\
    $\theta$-123 &  - & 874 &    55.0 &    \textbf{191} &     \textbf{38.6}\\
    $\theta$-162 & - &  907 &   108.9 &    \textbf{165} &    \textbf{61.2} \\
   $\theta$-1000 & - & 941 &   212.9 &    \textbf{177} &    \textbf{117.8} \\
   $\theta$-1500 & - &  997 &   803.6 &    \textbf{131} &   \textbf{266.6}\\
   $\theta$-2000 & - &1034 &  1948.5 &    \textbf{111} &   \textbf{478.8}\\ 
  \hline
  \hline
 \end{tabular}\vspace*{0.1cm}
 \caption{Comparison on $\theta$-number instances (from the Kim-Chuan Toh collection).}
 \label{tab:theta}  
 }
\end{table} 
\begin{table}
\small{
 \centering      
 \begin{tabular}{|c||r|r||r|r|} 
   \hline
   \hline
   \multicolumn{1}{|c||}{} 
    & \multicolumn{2}{c||}{\texttt{mprw.m}} & 
   \multicolumn{2}{c|}{\texttt{DADAL}}\\
   Problem &  iter & time(s) & iter & time(s) \\
  \hline
  \hline
   $\theta$-62 &    790 &     9.4 &  \textbf{132} &     \textbf{4.9}\\
   $\theta$-82 &    821 &    18.2 &  \textbf{124} &     \textbf{6.8} \\
   $\theta$-102 &   852 &    32.2 &  \textbf{123} &    \textbf{11.3}\\ 
   $\theta$-103 &   848 &    33.8 &  \textbf{125} &    \textbf{11.7}\\ 
   $\theta$-104 &   873 &    34.0 &  \textbf{166} &    \textbf{15.2}\\ 
   $\theta$-123 &   874 &    55.0 &  \textbf{121} &    \textbf{17.1}\\ 
   $\theta$-162 &   907 &   108.9 &  \textbf{103} &    \textbf{27.4}\\ 
   $\theta$-1000 &   941 &   212.9 &  \textbf{105} &    \textbf{50.6}\\ 
   $\theta$-1500 &   997 &   803.6 &  \textbf{94} &   \textbf{136.8}\\ 
   $\theta$-2000 & 1034 &  1948.5 &   \textbf{89} &   \textbf{283.9}\\ 
  \hline
  \hline
 \end{tabular}\vspace*{0.1cm}
 \caption{Comparison between \texttt{mprw.m} and \texttt{DADAL} on theta number instances - 
  best parameter tuning.}
  \label{tab:thetaSigmaBest}  
  }
\end{table}

\subsection{Comparison on linear ordering problem instances}
Ordering problems associate to each ordering (or permutation) of a set of $n$ objects 
$N = \{1,\ldots,n\}$ a 
profit and the goal is to find an ordering of maximum profit.
In the case of the linear ordering problem (LOP), this profit is determined by
those pairs $(u, v)\in N\times N$, where $u$ comes before $v$ in the ordering.
The simplest formulation of LOP problems is a binary linear programming problem. Several semidefinite
relaxation have been proposed to compute bounds on this challenging combinatorial problem~\cite{HuRe:2013}, the 
basic one obtained from the matrix lifting approach has the following formulation:
\begin{align*}
\max & \,\left\langle C, Z \right\rangle \nonumber \\
 \mbox{ s.t. } & Z \in \sdp, \; \diag(Z) = e,\\
   & y_{ij,jk} - y_{ij,ik} -y_{ik,jk} = -1,  \quad \forall i<j<k,\\ 
\end{align*}
where $ Z =  \begin{bmatrix}
                       1 & y^\top\\
                       y & Y
                      \end{bmatrix}$
is of order $n = {N \choose 2} + 1$.
We have considered LOP instances where the dimension of the set $N$ ranges from $10$ to $100$ 
and the matrix $C$ is randomly generated. 

Again, for these instances,  we have that $\Aa \Aat$ is a diagonal matrix and using~\texttt{DADAL}, especially when dealing 
with large scale instances, leads to an improvement both in terms of number of iterations 
and in terms of computational time (see Table~\ref{tab:LOPinst}).
\begin{table}
\centering      
{\small
  \begin{tabular}{|c||r|r||r||r|r||r|r|} 
   \hline
   \hline  
   \multicolumn{1}{|c||}{}&\multicolumn{2}{c||}{} 
    & \multicolumn{1}{c||}{\texttt{MOSEK}} 
    & \multicolumn{2}{c||}{\texttt{mprw.m}} & 
   \multicolumn{2}{c|}{\texttt{DADAL}}\\
    $|N|$ &  $n$   &  $m$  &   time(s) & iter & time(s) & iter & time(s)\\
  \hline
  \hline
  10 & 46 &   166       &  0.4   & 411 &     \textbf{0.2} &   \textbf{117} &     0.8\\  
  20 & 191 &   1331     &  3.6   & 441 &    \textbf{2.3} &  \textbf{150} &     2.6\\  
  30 & 436 &   4496     &  34.5   & 483 &    \textbf{12.1} &  \textbf{170} &    15.5\\ 
  40 & 781 &   10661    &  294.1   & 542 &    \textbf{60.7} & \textbf{232} &    79.8\\ 
  50 & 1226 &   20826   &  1339.8   & 604 &   237.7 &  \textbf{215} &   \textbf{232.1}\\ 
  60 & 1771 &   35991   &  -  & 636 &   759.8 &  \textbf{252} &   \textbf{748.3} \\
  70 & 2416 &   57156   &  -   & 707 &  \textbf{2049.1} &  \textbf{273} &  2122.7\\ 
  80 & 3161 &   85321   &  -   & 745 &  4788.6 &  \textbf{282} &  \textbf{4395.9}\\ 
  90 & 4006 &   121486  &  -   & 773 &  9589.3 & \textbf{300} &  \textbf{8923.2}\\ 
  100 & 4951 &   166651 &  -   & 821 &  18820.7 & \textbf{323} & \textbf{17944.1}\\
  \hline
  \hline
 \end{tabular}\vspace*{0.1cm}
 \caption{Comparison on linear ordering problems.}
 \label{tab:LOPinst}  
 }
\end{table} 
\section{Conclusions}\label{sec:concl}
We investigate the idea of factorizing the dual variable $Z$ when solving SDPs 
in standard form within augmented Lagrangian approaches.
Our proposal is to use a first order update of the dual variables in order 
to improve the convergence rate of ADAL methods. 
We add this improvement step to the implementation \texttt{mprw.m} and we 
conclude that the approach proposed looks particularly promising for
solving structured SDPs (which is the case for many applications).

From our computational experience we notice that the spectral decomposition needed to perform
the projection is not necessarily the computational bottleneck anymore. In fact,
the matrix multiplications needed in order to update $y$ (see Proposition~\ref{prop:yKeptOpt}) can be the most expensive
operations (e.g. in the case of randomly generated instances).

We also tried to use the factorization of $Z$ in a ``pure'' augmented Lagrangian algorithm. 
However, this turned out to be not competitive
with respect to ADAL methods. This maybe due to the fact that positive semidefiniteness of the primal matrix and complementarity conditions
are not satisfied by construction (as is the case in ADAL methods) and this slows down the convergence.
We want to remark that dealing with the update of the penalty parameter in ADAL methods turned out to be a critical issue. 
Here we propose a unified strategy 
that leads to a satisfactory performance on the instances tested. However, for specific instances, different parameter tuning 
may further improve the performance as demonstrated for the case of computing the $\theta$-number.

Finally, we want to comment about the extension of DADAL to deal with SDPs in general form.
Of course, any inequality constraint may be transformed into an equality constraint via the introduction of a
non-negative slack variable, so that DADAL can be applied to solve any SDP. 
However, avoiding the transformation to SDPs in standard form is in general preferable, in order to preserve favorable constraint 
structures such as sparsity and orthogonality.

In~\cite{Wen2010}, an alternating direction augmented Lagrangian method to deal with SDPs that includes, in particular, positivity
constraints on the elements of the matrix $X$ is presented and tested, even if no convergence analysis is given.
In fact, when considering multi-blocks alternating direction augmented Lagrangian methods, theoretical 
convergence is an issue~\cite{chen2016direct}. 
The investigation on how to properly insert the proposed factorization technique
within converging augmented Lagrangian schemes for SDPs in general form will be the topic of future work.

\section*{Acknowledgement}
The authors would like to thank the anonymous referees for their careful reading of the paper and 
for their constructive comments, which were greatly appreciated.
The third author acknowledges support by the Austrian Science Fund (FWF): I~3199-N31.
\bibliographystyle{elsarticle-num} 
\bibliography{DADAL}

\end{document}